\documentclass{article}
\usepackage{amsmath}
\usepackage{amssymb}

\newcommand{\EE}{\mathbb{E}}
\newcommand{\ZZ}{\mathbb{Z}}
\newcommand{\NN}{\mathbb{N}}

\newcommand{\RR}{\mathbb{R}}

\newcommand{\T}{\mathbb{T}}
\newcommand{\bP}{{\bf P}}

\newcommand{\hH}{{\cal H}}
\newcommand{\I}{{\cal I}}

\newcommand{\hLambda}{\widehat{\Lambda}}

\newtheorem{theorem}{Theorem}

\newtheorem{proof}{Proof}

\newtheorem{corollary}{Corollary}

\begin{document}
\begin{center}
{\Large STIT Tessellations -- Ergodic Limit Theorems and Bounds for 
the Speed of Convergence}

\vspace{1cm}

{Servet Mart{\'i}nez}\\
{Departamento Ingenier{\'\i}a Matem\'atica and Centro
Modelamiento Matem\'atico,\\  Universidad de Chile,\\
UMI 2807 CNRS, Casilla 170-3, Correo 3, Santiago, Chile.\\
Email: smartine@dim.uchile.cl} 

\vspace{0.5cm}

{Werner Nagel}\\
{Friedrich-Schiller-Universit\"at Jena,\\
Institut f\"ur Stochastik,\\
Ernst-Abbe-Platz 2,
D-07743 Jena, Germany.\\
Email: werner.nagel@uni-jena.de} 

\end{center}

\begin{abstract}
We consider homogeneous STIT tessellations in the $\ell$-dimensional 
Euclidean space $\RR^\ell$. Based on results for the 
spatial $\beta$-mixing coefficient an 
upper bound for the variance of additive functionals 
of tessellations is derived, using results by Yoshihara and Heinrich. 
Moreover, ergodic theorems are applied to subadditive functionals.
\end{abstract}

\noindent {\em Keywords:} { Stochastic geometry; Random process of tessellations; 
STIT tessellation; Ergodic theory; $\beta$-mixing}

\vspace{.5cm}

\noindent {\em AMS subject classification:} {60D05, 60J25, 60J75, 37A25}

\section{Introduction}
\label{sec0}

Random tessellations form an important class of models studied in 
stochastic geometry. Besides the well-established Poisson hyperplane 
tessellations and Poisson-Voronoi tessellations, the STIT tessellation 
extends the variety of models that are interesting from a theoretical as 
well as a practical point of view. An essential aspect for such models is 
whether ergodic limit theorems or even CLT's can be shown for certain 
functionals of tessellations. This forms the background for statistical 
applications, as parameter estimation, tests or confidence intervals.

\medskip

For the Poisson-Voronoi tessellations, a seminal paper dealing with these 
aspects is \cite{hein94}. There is also an appropriate concept developed 
for mixing coefficients in space, including sufficient conditions for 
CLT's. A recent review can be found in \cite{hein}.

\medskip

For STIT tessellations, which were introduced in \cite{nw}, a profound 
study of second order properties and the limit behavior of certain 
functionals was provided in \cite{schthaeLIM1, schthaeLIM3}. These papers 
also contain a good intuitive interpretation and a discussion of the 
results. It becomes evident that STIT tessellations show a long-distance 
(in space) dependency behavior which is in clear contrast to 
Poisson-Voronoi tessellations. This causes a quite different limiting 
behavior of STIT compared to Poisson-Voronoi tessellations.

\medskip

In \cite{schthaeLIM1, schthaeLIM3} detailed results for the asymptotic 
variances (in a growing observation window) are proved for the number of 
vertices of planar stationary and isotropic STIT tessellations and for the 
total edge length (planar case) or the total surface area/volume of cell 
boundaries (dimension $\geq 2$), respectively. This work is mainly based 
on an application of martingale theory.

\medskip

In the present paper we apply our results from \cite{martnag15} on the 
$\beta$-mixing coefficient of STIT tessellations in two ways. Consider a 
translation invariant functional $X$ defined on tessellations. The 
convergence is studied of values of $X$ on a growing sequence of bounded 
observation windows, divided by the respective volumes of these windows. 
Due to the ergodicity of homogeneous STIT tessellations the limit should 
be the expectation of $X$ for a window of unit volume.

\medskip

First, for additive functionals $X$ we use the Yoshihara-Heinrich method derived in 
\cite{hein94} (referring back to \cite{yoshi}) in order to find a 
universal upper bound for the variance of the $X$-value for a given 
window. This implies a proposition on the speed of $L^2$-convergence of 
the rescaled $X$-values for a growing sequence of windows.

\medskip

Furthermore, we apply results from ergodic theory 
(Theorem by Akcoglu-Krengel and 
by Smythe, see \cite{kreng}) for multi-parameter subadditive processes to 
homogeneous STIT tessellations. This yields a.s. convergence and $L^1$ 
convergence even if $X$ is a subadditive functional.

\medskip

In Section \ref{sectionSTIT} we give a short description of the STIT 
tessellation process, as a process in time with values in the set of all 
tessellations of the $\ell$ dimensional Euclidean space $\RR^\ell$. Then, 
in Section \ref{betaSTIT}, we recall the definition of the $\beta$-mixing 
coefficient in general and for random tessellations in particular, and we 
quote our result from \cite{martnag15} for STIT tessellations. The main 
result concerning the variance of additive functionals $X$ is then given 
in Section \ref{mainres} together with some examples. The theorem is 
proved in Section \ref{sec:yoshihein}. Finally, Section \ref{sec:ergod} 
provides the application of ergodic theorems to subadditive functionals 
$X$ for STIT.

\section{STIT tessellations}
\label{sectionSTIT}

For the first time, STIT tessellations were defined in \cite{nw}. There, a 
construction of STIT in bounded windows was described in all detail. An 
alternative but equivalent construction was given in \cite{martnag}. 
Throughout the whole paper, we consider tessellations of the 
$\ell$-dimensional Euclidean space $\RR^\ell$ with dimension $\ell \geq 
2$. (Note that the STIT tessellation in $\RR^1$ is generated by a 
stationary Poisson point process on the real axis.)

\medskip

A detailed and sound definition of the measurable space of tessellations of 
a Euclidean space is given in \cite{sw} (Ch. 10, Random Mosaics). A 
tessellation is a set $y$ of polytopes (the cells) with disjoint interiors 
and covering the Euclidean space, where each bounded subset of $\RR^\ell$ 
is intersected by only finitely many cells (locally finiteness condition). 
On the other hand, a tessellation can as well be considered as a closed 
set $\partial y \subset \RR^\ell$ which is the union of the cell boundaries. There is an 
obvious one-to-one relation between both ways of description of a 
tessellation, and their measurable structures can be related 
appropriately, see \cite{sw, martnag}. Denote by $\T$ the set of all 
tessellations of $\RR^\ell$.

\medskip

Let ${\cal C}$ be the set of all compact subsets of 
$\RR^\ell$. We endow $\T$ with the Borel $\sigma$-algebra ${\cal B}(\T)$
of the Fell topology (also known as the topology of closed convergence), 
namely 
$$ 
{\cal B}(\T)=\sigma \left( \{ \{ y\in 
\T :\, \partial y \cap C=\emptyset \} :\, C\in {\cal C} \} \right)
$$
($\sigma({\I})$ is the smallest 
$\sigma-$algebra containing the class $\I$ of sets.)

\medskip

We can also consider tessellations of a bounded window $W$ which is assumed to be a polytope (i.e. 
the convex hull of a finite set of points) with nonempty interior. Denote the set of all those tessellations by $\T\wedge W$. If
$y\in \T$ we denote by $y\land W$ the induced tessellation on $W$.
Its boundary is defined by 
$\partial (y\wedge W)= (\partial y \cap W)\cup {\partial W}$.

\medskip

\medskip

For a window $W$ we introduce the following sub-$\sigma$-algebras of
${\cal B}(\T)$:
\begin{eqnarray}
\nonumber
&{}& {\cal B}(\T_{W})=\sigma \left( \{ \{y\in \T :\, \partial y\cap C=
\emptyset \} :\, C\subseteq W,\, C\in {\cal C} \} \right) 
\hbox{ and }\\
\label{defin}
&{}& {\cal B}(\T_{W^c})=\sigma \left( \{ \{ y\in \T :\, \partial y\cap C=
\emptyset  \} :\, C\subset W^c,\, C\in {\cal C}  \} \right) .
\end{eqnarray}
We notice that if $W'\subseteq W$ then
${\cal B}(\T_{W'})\subseteq {\cal B}(\T_{W})$
and ${\cal B}(\T_{{W}^c})\subseteq {\cal B}(\T_{W'^c})$.

\medskip

We will consider here the action of (spatial) translations on the 
space of tessellations $\T$. For $a\in \RR^\ell$, the translated
tessellation $y+a$ is defined by
$\partial (y+a)=(\partial y)+a=\{z+a: z\in \partial y\}$.

\medskip

\subsection{Construction of STIT}
\label{subsec:construction}

Now we describe a construction of the STIT process, 
which is a particular  
continuous time  Markov process on $\T$.
Let $(\hH , {\mathfrak H})$ denote the measurable space of all hyperplanes 
(i.e. $(\ell -1)$-dimensional affine subspaces) 
in $\RR^\ell$ (cf. \cite{sw})
and $\Lambda$ be a (non-zero) measure
on this space of hyperplanes that satisfies:

\medskip

\noindent {\bf Assumption I:} {\em
The measure $\Lambda$ on $(\hH , {\mathfrak H})$ 
is translation invariant and
possesses the following locally finiteness property:
$\Lambda([B])<\infty$ for all polytopes $B\subset \RR^\ell$, 
where we denote 
$$
[B]=\{H\in {\cal H}: H\cap B\neq \emptyset \}.
$$
Moreover, the support of $\Lambda$ is such that there is no line in
$\RR^\ell$ with the property that all hyperplanes
of the support are parallel to it} (in order
to obtain a.s. bounded cells in the constructed tessellation, cf.
\cite{sw}, Theorem 10.3.2). 

\medskip

This assumption implies  
$0<\Lambda([W])<\infty$ for every window $W$. 
We denote by $\Lambda_{[W]}$ the restriction of $\Lambda$ to 
$[W]$ and by $\hLambda_{[W]}=\Lambda ([W])^{-1}\Lambda_{[W]}$ 
the normalized probability measure.

\medskip

Let us give a brief description of the construction of 
$Y\wedge W=(Y_t\wedge W: t\geq 0)$, 
the random process of STIT tessellations on $W$. 
Let $(\tau_n, n\in \NN)$ be a sequence of independent identically 
distributed random variables, each one of them exponentially 
distributed with parameter $1$.

\medskip

\noindent {\bf Algorithm.}

\medskip

\noindent $(a)$ We define $Y_0\wedge W= \{W\}$ the 
trivial tessellation for the window $W$, and its unique cell is
denoted by $C^1=W$. 

\medskip

\noindent $(b)$ Any cell $C^i$ which is generated in the course of the 
construction has the lifetime $\Lambda ([C^i])^{-1}\, \tau_i$, i.e. an 
exponentially distributed lifetime with parameter $\Lambda ([C^i])$. At 
the end of its lifetime the cell $C^i$ it is divided into two 
new cells by a random hyperplane $H_i$ with law $\hLambda_{[C^i]}=\Lambda 
([C^i])^{-1}\Lambda_{[C^i]}$, where $H_i$ is (if $C^i$ is given) 
conditionally independent from all $\tau_j$ and all $H_j$, $j<i$.

\medskip

\noindent $(c)$ This procedure is 
performed for any extant cell independently. 
It is easy to see that at any time a.s. at most one cell
dies and so a.s. at most only two cells are born.
We enumerate the cells according to the time $t$ when they are 
born: if at this time
there have already born $n$ cells, then we denote the cells 
resulted from a division at $t$ by $C^{n+1}$ and $C^{n+2}$.

\medskip

With this notation, at each time $t>0$ the tessellation $Y_t\land W$ is
constituted by the cells $C^i$ which 'live' at time $t$. In \cite{nw} 
it was shown that there is no explosion, so
at each time $t>0$ the number of cells
of $(Y\land W)_t$, denoted by $\xi_t$, is finite a.s..

\medskip

By construction, the holding time at $t$
$$
\sigma_t=\inf\{s>0: (Y\wedge W)_{t+s}\neq (Y\wedge W)_t\}.
$$
is exponentially distributed with parameter
\begin{equation}
\label{defzeta}
\zeta((Y\wedge W)_t) \hbox{ where } \zeta(y)= \sum_{C\in y} \Lambda([C])
\; \hbox{ for } y\in \T\wedge W.
\end{equation} 

On every window $W$ there exists $Y\land W =((Y\land W)_t: t > 0)$, 
which we call a STIT tessellation process. 
It turns out to be a pure jump Markov process 
and hence it has the strong Markov property. 
Furthermore, if $W'$ and $W$ are windows such that 
$W'\subseteq W$, then for any $t>0$: 
$(Y\wedge W)_t \wedge W' \stackrel{D}{=} (Y\wedge W')_t$, 
where $\stackrel{D}{=}$ denotes the identity of distributions 
(for a proof see \cite{nw}). This yields the existence of a STIT 
tessellation $(Y_t)$ of $\RR^\ell$ such that for all windows $W$ 
we have $Y_t \wedge W\stackrel{D}{=} (Y\wedge W)_t$.

\medskip

A global construction for a STIT process was provided in \cite{mnw}
which shows straightforwardly that $Y=(Y_t: t>0)$ is a Markov process,
so for all $t>0$, $Y_t$ takes values in $\T$. A property we shall
use is that $Y_t$ is spatially stationary (also referred to as homogeneous)
i.e. its law is translation invariant,
$Y_t\stackrel{D}{=}Y_t+a$ for all $a\in \RR^\ell$.
We will denote by $P_t$ the law of $Y_t$ on $(\T, {\cal B}(\T))$.

\section{An upper bound for the spatial $\beta$-mixing coefficient of 
STIT tessellations}
\label{betaSTIT}

\subsection{The $\beta$-mixing coefficient}
\label{Sub1.1}

First, we recall the general definition of the $\beta$-mixing coefficient 
for pairs of $\sigma$-algebras as given in \cite{volroz59}, see also \cite{hein94}. For a 
probability space $(\Omega, {\mathcal D},\bP)$ and sub-$\sigma$-algebras $ 
{\mathcal A},{\mathcal B}\subseteq {\mathcal D}$ denote by $\bP_{\mathcal 
A}$, $\bP_{\mathcal B}$ the restrictions of $\bP$ to $ {\mathcal A}$ and 
${\mathcal B}$ respectively, and by $\bP_{{\mathcal A}\otimes {\mathcal 
B}}$ the restriction to ${\mathcal A}\otimes {\mathcal B}$ of the image measure 
of $\bP$ on $\Omega \times \Omega$ induced by the diagonal mapping $\omega 
\mapsto (\omega ,\omega ) $. Also, and as usual, we denote by 
$\bP_{\mathcal A} \otimes \bP_{\mathcal B}$ the product measure of
$\bP_{\mathcal A}$ and $\bP_{\mathcal B}$. Then, the 
$\beta$-mixing coefficient can be 
given by one of the following expressions:

\begin{eqnarray*}
\label{defbeta}
\beta ({\mathcal A},{\mathcal B}) & = & 
\EE \left(\sup_{B\in {\mathcal B}} \left| \bP(B| {\mathcal A})- 
\bP (B) \right|\right) 
\\ 
 & = & \sup_{C\in {\mathcal A}\otimes {\mathcal B}} \left|  
\bP_{{\mathcal A}\otimes {\mathcal B}}(C)- (\bP_{\mathcal A} 
\otimes \bP_{\mathcal B})(C)  \right| \\
 & = & \frac{1}{2}\;  \sup_{({\overline {A}}, {\overline {B}})} 
\;\; \sum_{r=1}^I \sum_{s=1}^J |
\bP(A_r \cap B_s)-\bP(A_r) \bP( B_s) | ,
\end{eqnarray*}
where in the last expression the supremum is taken over all pairs of finite 
partitions of $\Omega$:
${\overline {A}}= \{{A}_r, r=1,\ldots I \}$ and 
${\overline {B}}=\{ B_s, s=1,\ldots J \}$ with $I,J\in \NN$, 
for events $A_r\in {\cal A}$, $B_s\in {\cal B}$.

\medskip

Now we consider the special case of pairs of $\sigma$-algebras (\ref{defin}) which are 
defined with respect to pairs of windows. This is tailored for dealing 
with models in stochastic geometry (see the seminal paper \cite{hein94} 
and also \cite{dvj}).

\medskip

In \cite{hein94} the $\beta$-mixing rate for a random tessellation with 
distribution $\bP$  on $(\T, {\cal B}(\T))$ is introduced  as follows.
Consider a pair of windows $W'=[-a,a]^\ell \subset W= [-b,b]^\ell$, $0<a<b$. 
Then define
\begin{equation}
\label{defbeta}
\beta (a,b) =  \beta \left( {\cal B}(\T_{W'}) , {\cal B}(\T_{{W}^c}) 
\right), 
\end{equation}
where ${\cal B}(\T_{W'})$ and ${\cal B}(\T_{{W}^c})$ are given by 
(\ref{defin}).

\medskip

In \cite{martnag15} we proved an upper bound for $\beta (a,b)$ for the 
STIT tessellation $Y_t$. To supply it we will introduce the whole 
context.
For a fixed time $t>0$ we choose $\bP=P_t$ on $(\T, {\cal B}(\T))$, 
the marginal distribution of $Y_t$, to study $\beta (a,b)$. 

\medskip

Consider the windows $W'=[-a,a]^\ell$, $W=[-b,b]^\ell$ with $0<a<b$ and 
denote their $(\ell -1)$-dimensional facets by $f_r'$ and $f_r$ 
respectively, $r=1,\ldots ,2\ell$. We define them for $r=1 ,\ldots ,\ell$ 
as $$ f_r'= [-a,a]\times \ldots \times [-a,a]\times \{ a\} \times 
[-a,a]\times \ldots \times [-a,a] $$ with the singleton $\{ a\}$ standing 
at the $r$-th position, and $f_{r+\ell}'= -f_r'$ for $r=1 ,\ldots ,\ell$. 
The $f_r$ are defined as the $f'_r$ respectively, by replacing $a$ by $b$. 
Since $f'_r$ and $f_r$ are disjoint closed convex sets, the 
class of their separating hyperplanes $G_r(a,b)=[f'_r | f_r]$ is 
nonempty and it is a measurable set, that belongs to ${\mathfrak H}$. 
We need the following additional assumption on $\Lambda$ on the sets of
separating hyperplanes $G_r(a,b)=[f'_r | f_r]$, 
$r=1,\ldots, 2\ell$. . 

\medskip

\noindent {\bf Assumption II:} For all $0<a<b$ and the windows 
$W'=[-a,a]^\ell$, $W=[-b,b]^\ell$ we assume that $\Lambda (G_r(a,b))>0$ 
for all $r=1,\ldots , 2\ell$.

\medskip

In \cite{martnag15} we proved (in Theorem $5.3$) the following property on the
$\beta-$mixing coefficient for the STIT tessellations.

\begin{theorem}
\label{decaypoly}
Let $(Y_t)$ be the STIT tessellation determined by the hyperplane measure 
$\Lambda$ satisfying Assumptions I and II. Then for all fixed $t>0$, and  
$\bP=P_t$ being the distribution of $Y_t$ on $(\T, {\cal B}(\T))$, it holds 
$\lim_{b\to \infty} \beta (a,b) =0$ for all $a>0$.
Moreover,{defin} for all $\theta\in (0,1)$ 
there exists a constant $\chi=\chi(t,a,\theta)<\infty$
such that $\beta (a,b)\le \chi \, b^{-\theta}$ for all $b> a>0$. $\Box$
\end{theorem}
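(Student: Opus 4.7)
My plan is to upper bound $\beta(a,b)$ by the probability of a ``bridging'' event in the STIT construction, and then to estimate that probability.

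I would realize $Y_t$ through the Poisson--hyperplane picture underlying Subsection~\ref{subsec:construction}: drive the construction by a Poisson point process $\Psi$ on $\hH\times[0,t]$ with intensity $\Lambda\otimes ds$, where each mark $(H,s)$ attempts to cut any alive cell $C$ with $H\in[C]$. Split $\Psi$ into three independent thinnings according to whether the hyperplane avoids $W^c$, avoids $W'$, or meets both. Let $E_{a,b}$ be the event that at least one mark of the third, bridging kind is accepted as a cut by time~$t$. On $E_{a,b}^c$, the trace $\partial Y_t\cap W'$ is a measurable function of the first piece of $\Psi$ only, and $\partial Y_t\cap W^c$ of the second only --- this is where the consistency $(Y\wedge W)_t\wedge W'\stackrel{D}{=}(Y\wedge W')_t$ recalled in Subsection~\ref{subsec:construction} enters --- so the sub-$\sigma$-algebras ${\cal B}(\T_{W'})$ and ${\cal B}(\T_{W^c})$ are conditionally independent on $E_{a,b}^c$. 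The elementary total-variation form of $\beta$ from Subsection~\ref{Sub1.1} then gives $\beta(a,b)\leq 2\PP(E_{a,b})$.

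Next I would estimate $\PP(E_{a,b})$. A bridging mark $(H,s)$ is accepted only if at time $s$ some alive cell $C$ meets both $W'$ and $W^c$ and is hit by $H$; such a cell has diameter at least $b-a$. Using the cell-lifetime rule (\ref{defzeta}) and a Mecke-type computation, the expected number of accepted bridging cuts up to $t$ is controlled by the space-time integral of the $\Lambda$-mass of bridging hyperplanes against the occupation measure of straddling cells. By spatial stationarity of $Y_s$ on $\RR^\ell$ and standard moment bounds for typical-cell diameters of homogeneous STIT, the probability that some cell meeting $W'$ has diameter $\geq b-a$ decays faster than any $b^{-\theta}$ with $\theta<1$. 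Combining with the at-most-linear growth of $\Lambda([W])$ in $b$ from Assumption~I, the positivity of $\Lambda(G_r(a,b))$ from Assumption~II, and the finite time horizon $t$, this yields $\PP(E_{a,b})\leq\chi\,b^{-\theta}$; the first assertion $\lim_{b\to\infty}\beta(a,b)=0$ follows immediately.

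The hard step is the second one: controlling straddling cells over the whole interval $[0,t]$. Early in the construction cells are still large enough to bridge the buffer, so one must integrate over the branching history while quantifying the attrition of straddling cells. Assumption~II is essential here, as without a positive $\Lambda$-mass of separating hyperplanes the straddling cells could persist at time~$t$ with non-negligible probability. The restriction $\theta<1$ rather than $\theta=1$ reflects the interplay between the linear-in-$b$ growth of $\Lambda([W])$ and the polynomial tail bound available for the STIT cell diameters, and should be absorbed into the moment estimate with a logarithmic slack.
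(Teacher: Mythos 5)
The paper does not actually prove Theorem \ref{decaypoly} here: it is quoted verbatim from Theorem 5.3 of \cite{martnag15}, so your plan must be measured against the argument given there, and against that it has a genuine gap --- its central reduction fails. First, the three-way thinning is degenerate: an affine hyperplane in $\RR^\ell$ is unbounded, so \emph{every} hyperplane meets $W^c$; the class ``avoids $W^c$'' is empty and your ``bridging'' class is simply $[W']$. Hence $E_{a,b}$ is essentially the event that $\partial Y_t$ meets the interior of $W'$, whose probability is bounded away from $0$ uniformly in $b$. Second, even if one repairs the definition to ``some accepted cut divides a cell meeting both $W'$ and $W^c$'', the probability of that event still does not tend to $0$: at small times $s$ the cell containing $W'$ is huge and straddles $W^c$ with probability close to $1$, and since such a cell $C$ has $\Lambda([C])\gtrsim b$ it is cut at rate $\gtrsim b$, so a Mecke-type computation gives an expected number of bridging cuts of order $\int_0^t c\,b\,e^{-csb}\,{\rm d}s\approx 1$, not $o(1)$. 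The inequality $\beta(a,b)\le 2\PP(E_{a,b})$ is therefore vacuous here. Third, your explanation of the exponent is wrong: at fixed $t$ the diameter of the cell of $Y_t$ containing a given point has exponentially decaying tails (it is stochastically dominated by the zero cell of a Poisson hyperplane tessellation with measure $t\Lambda$), so if cell diameters were the controlling quantity the decay would be exponential in $b$, not $b^{-\theta}$.

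The mechanism in \cite{martnag15} is different and is precisely the device your plan lacks: one shows that by a small time $s$ the window $W'$ is, with probability at least $1-2\ell e^{-cs(b-a)}$, \emph{encapsulated} by hyperplanes from the separating classes $G_r(a,b)$ (here Assumption II together with translation invariance of $\Lambda$ yields $\Lambda(G_r(a,b))\ge c(b-a)$), after which the evolution inside the encapsulating box is conditionally independent of everything outside $W$; the dependence unavoidably created during the initial interval $[0,s]$ is then paid for by a total-variation cost of order $s$. Optimizing $s\sim \theta\ln b/(cb)$ balances $e^{-csb}=b^{-\theta}$ against $O(s)=O(\ln b/b)$ and produces exactly the rate $b^{-\theta}$ for every $\theta<1$; this trade-off, not any tail bound on cell diameters, is why $\theta=1$ is not attained. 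Your plan contains no step that discounts the early-time bridging, which is the heart of the proof.
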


\section{Main result}
\label{mainres}

Using our results for an upper bound of the $\beta$-mixing coefficient, we 
obtain a proposition concerning $L^2$-convergence including a bound for 
the speed of convergence.

\medskip

For $c=(c_1,\ldots , c_\ell ) ,\, d=(d_1,\ldots , d_\ell )\in \RR^\ell$ let
$[c,d[=\displaystyle{\prod_{r=1}^\ell} [c_r,d_r[$. We denote by
${\mathcal V}=\{ [c,d[ \, :\ c, d\in \RR^\ell, [c,d[ \neq \emptyset \}$
the family of nonempty half-open cuboids.

\medskip
 
For $y\in \T$ and $V\in {\mathcal V}$, by $y\wedge V$ we mean the 
tessellation induced by $y$ in $V$. 

\medskip

\begin{theorem}
\label{main1}
Let $Y_t$ be the STIT tessellation at time $t>0$ determined by the 
hyperplane measure $\Lambda$ satisfying Assumptions I and II. 
Further, for any $V\in {\mathcal V}$ 
let $X(V , \cdot ):\T \wedge V \to \RR$ 
be a functional with the following 
properties (briefly we write $X(V,y)$ for $X(V,y\wedge V)$):
\begin{enumerate}

\item If $n\in \NN$ and $V_1,\ldots , V_n \in {\mathcal V}$ are 
pairwise disjoint  such that $\bigcup_{r=1}^n V_r \in {\mathcal V}$, then
\begin{equation}
\label{eq:additiv}
X\left(\bigcup_{r=1}^n V_r , \cdot \right) = \sum_{r=1}^n X(V_r,\cdot ) 
\qquad (additivity).
\end{equation}

\item $X(V,y) = X(V+i,y+i)$ for all $V \in {\mathcal V}$, $i\in \RR^\ell$, 
$y \in \T$.

\item For some $\delta>0$ and some $V\in {\mathcal V}$ we have 
$\EE \left(X(V, Y_t)^{2+\delta} \right) <\infty$.  
\end{enumerate}
Then  for all  $\theta \in (0,1)$
\begin{equation}
\label{asymvar}
 Var\left(\frac{1}{(2n)^\ell} X([-n,n[^\ell , Y_t)\right) 
\leq O\left(n^{-\theta \frac{\delta}{2+\delta}} \right) \quad  
 \;  n\to \infty.
\end{equation}
\end{theorem}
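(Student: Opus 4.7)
The plan is to apply the Yoshihara--Heinrich variance decomposition: tile $[-n,n[^\ell$ by unit cubes, expand the variance of the spatial average as a double sum of covariances, bound each covariance by Yoshihara's inequality in terms of the $\beta$-mixing coefficient, and then invoke Theorem \ref{decaypoly} for the decay rate.

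First, set $V_0=[0,1[^\ell$, $V_i=V_0+i$ for $i\in\ZZ^\ell$, and $I_n=\{-n,\ldots,n-1\}^\ell$, so that by the additivity property~1,
\[
X([-n,n[^\ell,Y_t)=\sum_{i\in I_n}\xi_i,\qquad \xi_i:=X(V_i,Y_t),\qquad |I_n|=(2n)^\ell.
\]
Property~2 and the spatial stationarity of $Y_t$ make the family $(\xi_i)_{i\in\ZZ^\ell}$ identically distributed; after extending the moment condition in property~3 from the given cuboid to $V_0$ (by tiling, additivity and Minkowski), each $\xi_i$ lies in $L^{2+\delta}$ with a common norm $M<\infty$. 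Stationarity also yields $\text{Cov}(\xi_i,\xi_j)=:c(j-i)$, whence
\[
\text{Var}\!\left(\frac{1}{(2n)^\ell}\sum_{i\in I_n}\xi_i\right)\leq \frac{1}{(2n)^\ell}\sum_{k\in\ZZ^\ell,\,|k|_\infty\leq 2n-1}|c(k)|.
\]

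The core estimate is Yoshihara's covariance inequality (see \cite{yoshi,hein94}): since $\xi_0,\xi_k$ are measurable respectively with respect to ${\cal B}(\T_{V_0})$ and ${\cal B}(\T_{V_k})$,
\[
|c(k)|\leq C_0\,M^2\,\beta\!\left({\cal B}(\T_{V_0}),{\cal B}(\T_{V_k})\right)^{\delta/(2+\delta)}.
\]
For $|k|_\infty\geq 2$, by translation invariance of $Y_t$ and the monotonicity of $\beta$ under inclusion of $\sigma$-algebras, I enclose $V_0$ in a translate of $[-a,a]^\ell$ for a fixed small $a>0$ and arrange $V_k$ to lie in the complement of a translate of $[-b,b]^\ell$ with $b\sim |k|_\infty$; Theorem \ref{decaypoly} then yields $\beta\leq \chi\,b^{-\theta}$, and hence $|c(k)|\leq C_1\,|k|_\infty^{-\theta\delta/(2+\delta)}$. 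For the $O(1)$ small-shift terms I use the trivial Cauchy--Schwarz bound $|c(k)|\leq M^2$.

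Summing by shells $|k|_\infty=m$, each of cardinality $O(m^{\ell-1})$,
\[
\sum_{|k|_\infty\leq 2n-1}|c(k)|=O(1)+O\!\left(\sum_{m=2}^{2n-1}m^{\ell-1-\theta\delta/(2+\delta)}\right)=O\!\left(n^{\ell-\theta\delta/(2+\delta)}\right),
\]
since $\theta\delta/(2+\delta)<1\leq\ell-1$ keeps the summand's exponent positive. Dividing by $(2n)^\ell$ produces exactly the bound $O(n^{-\theta\delta/(2+\delta)})$ claimed in \eqref{asymvar}. The main obstacle I anticipate is the transfer step in the preceding paragraph: Theorem \ref{decaypoly} is formulated only for pairs of concentric cubes centered at the origin, whereas here $V_0$ and $V_k$ are small cubes sitting at arbitrary lattice positions. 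A clean invocation therefore requires the spatial stationarity of $Y_t$ to recenter the pair and the monotonicity ${\cal B}(\T_U)\subseteq {\cal B}(\T_{U'})$ for $U\subseteq U'$, together with its dual on the complement side, so as to embed the actual $\sigma$-algebras into those appearing in Theorem \ref{decaypoly} and thereby convert the polynomial decay $b^{-\theta}$ into the desired decay in $|k|_\infty$.
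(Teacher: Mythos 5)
Your proposal is correct and follows essentially the same route as the paper: tiling $[-n,n[^\ell$ by unit cubes, bounding each covariance via the Yoshihara--Heinrich inequality combined with the $\beta$-mixing bound of Theorem \ref{decaypoly} (after recentering the pair of cubes so that $c_i$ sits in $W_{1/2}$ and $c_j$ in the complement of $W_{d(i,j)-\kappa}$), and summing over shells of cardinality $O(m^{\ell-1})$. The transfer step you flag as the main obstacle is handled in the paper exactly as you describe, and your extra care in extending the moment hypothesis from ``some $V$'' to the unit cube via tiling and Minkowski is a detail the paper glosses over.
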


We note that inequality (\ref{upperbound}) which we will supply in the 
proof of this theorem, will provide an upper bound for the variance for 
any fixed $n\in \NN$.

\medskip

Since the law of $Y_t$ is translation invariant we have 
$Y_t\wedge W\stackrel{D}{=}[(Y_t+a)\wedge (W+a)]-a$ for all 
$a\in \RR^\ell$. 
Hence, 
$$
\EE\left(X([-n,n[^\ell , Y_t)\right)=(2n)^\ell
\EE\left(X([0,1[^\ell, Y_t)\right),
$$
and the variance can be written as
$$
Var\left(\frac{1}{(2n)^\ell} X([-n,n[^\ell , Y_t)\right) =
\EE  \left(\left(\frac{1}{(2n)^\ell} X([-n,n[^\ell, Y_t) - 
\EE X([0,1[^\ell , Y_t) \right)^2 \right).
$$
Therefore relation (\ref{asymvar}) immediately indicates $L^2$-convergence
in the law of large numbers. The expectation 
$\EE\left(X([0,1[^\ell, Y_t)\right)$ is  called 
the $X$-density of $Y_t$.

\medskip

Let us compare our theorem with the results by Schreiber and Th\"ale. 
The papers \cite{schthaeLIM1, schthaeSecondOrder12} contain 
a rich material concerning the second-order properties of STIT tessellations. 
Here we only refer to some asymptotic results that can be compared 
immediately with our proposition.

\medskip

Let $Y_t$ be a spatially stationary {\em and isotropic} STIT tessellation 
at time $t>0$.

\noindent $(i)$ Let $\ell =2$.
If the random field $X([-n,n[^2 , Y_t)$ is the number of vertices, or
$X([-n,n[^2, Y_t)$ is the number of center points of maximal
($I-$)segments, or $X([-n,n[^2, Y_t)$ is the total length of edges, 
of $Y_t$ in $[-n,n[^2$, then: 
$$
Var\left(\frac{1}{(2n)^2} X([-n,n[^2, Y_t)\right) = 
O\left(n^{-2} \ln n \right) \quad  \mbox{ for  }  \ n\to \infty ,
$$ 
(see \cite{schthaeLIM1}, Corollary 2, 
and \cite{schthaeSecondOrder12}, Theorem 6.1).
\medskip

\noindent $(ii)$ Let $\ell \geq 3$. If
$X([-n,n[^\ell, Y_t)$ is the total surface area of cell boundaries of $Y_t$ 
in $[-n,n[^\ell$ then: 
$$
Var\left(\frac{1}{(2n)^\ell} X([-n,n[^\ell, Y_t)\right) = 
O\left(n^{-2} \right) \quad  \mbox{ for  }  \ n\to \infty ,
$$ 
(see \cite{schthaeSecondOrder12}, Theorem 6.1).
Hence, in the above particular cases the asymptotic variance 
is considerably smaller than our upper bound (\ref{asymvar}).

\medskip

\noindent Some examples where Theorem \ref{main1} can be applied and up to 
now no better bound for the variance is known, are the following ones.

\begin{enumerate}
\item As in (i) and (ii) above, but anisotropic STIT tessellations.

\item $(1)$ $\ell \geq  3$, $0\leq k\leq \ell -1$: 
\begin{enumerate}
\item total $k$-volume of $k$-dimensional faces of cells in $[-n,n[^\ell$, 

\item number of reference points of 
$k$-dimensional faces of cells in $[-n,n[^\ell$,

\item number of reference
points of other $k$-dimensional objects of $Y_t$ in $[-n,n[^\ell$
(see \cite{weisscow11, thaeweiss13}).
\end{enumerate}
\end{enumerate}
Reference points are uniquely determined points assigned to all the 
considered sets by the same rule, e.g. their circumcenters or their 'most 
left' or their 'lowest' points.

\section{The Yoshihara-Heinrich method for upper bounds for the variance}
\label{sec:yoshihein}

\subsection{Upper bound for the covariance}
We will obtain the upper bound (\ref{asymvar}) by using the following result, that can 
be found in \cite{hein94}, formula (4.6).

\begin{theorem} (Yoshihara-Heinrich) 
\label{th:yohei}
Let $(\Omega, {\mathcal D},\bP)$ be a probability space, 
${\mathcal A},{\mathcal B}\subseteq {\mathcal D}$ sub-$\sigma$-algebras 
and $\beta ({\mathcal A},{\mathcal B})$ the respective 
$\beta$-mixing (absolute-regularity) coefficient.
Then for any ${\mathcal A}\otimes {\mathcal B}$-measurable 
function $h: \Omega \times  \Omega \to \RR^d$, $d\geq 1$, and 
all $\delta > 0$
\begin{eqnarray}
\label{yohei}
&{}& \int |h| \, |{\rm d} (\bP_{{\mathcal A}\otimes 
{\mathcal B}} -\bP_{\mathcal A} \otimes\bP_{\mathcal B})|  \\
&\leq & 2 \max \left\{  \left( \int |h|^{1+\delta} \, {\rm d}
\bP_{{\mathcal A}\otimes {\mathcal B}}\right)^{\frac{1}{1+\delta}},
\left( \int |h|^{1+\delta} \, {\rm d}\bP_{\mathcal A} \otimes 
\bP_{\mathcal B}\right)^{\frac{1}{1+\delta}}  \right\} \, 
(\beta ({\mathcal A},{\mathcal B}))^{\frac{\delta}{1+\delta}} 
\nonumber
\end{eqnarray}
\end{theorem}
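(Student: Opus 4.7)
The plan is to decompose the signed measure $\mu := \bP_{{\cal A}\otimes {\cal B}} - \bP_{\cal A}\otimes \bP_{\cal B}$ on the product $\sigma$-algebra ${\cal A}\otimes {\cal B}$ into its Hahn--Jordan parts and reduce the claim to a direct application of H\"older's inequality. Writing $|\mu| = \mu_+ + \mu_-$ with $\mu_+$ carried by a positive Hahn set $D \in {\cal A}\otimes{\cal B}$ and $\mu_-$ by $D^c$, the first task is to identify
$$
\mu_+(\Omega\times\Omega) \;=\; \mu_-(\Omega\times\Omega) \;=\; \beta({\cal A},{\cal B}).
$$
This follows from the third expression for $\beta$ quoted just before the theorem, combined with the standard fact that partitions of $\Omega\times\Omega$ by measurable rectangles are cofinal in the lattice of all ${\cal A}\otimes{\cal B}$-measurable partitions, so their supremum realises $\tfrac{1}{2}\|\mu\|_{TV}$.

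Second, I would exploit the elementary dominations
$$
\mu_+ \le \bP_{{\cal A}\otimes {\cal B}}, \qquad \mu_- \le \bP_{\cal A}\otimes \bP_{\cal B},
$$
valid as positive measures on ${\cal A}\otimes {\cal B}$: for any $E\in{\cal A}\otimes{\cal B}$,
$$
\mu_+(E) \;=\; \bP_{{\cal A}\otimes{\cal B}}(E\cap D) - (\bP_{\cal A}\otimes \bP_{\cal B})(E\cap D) \;\le\; \bP_{{\cal A}\otimes{\cal B}}(E),
$$
and symmetrically for $\mu_-$ on $D^c$. Applying H\"older with conjugate exponents $p=1+\delta$ and $q=(1+\delta)/\delta$ on each piece, and using the total-mass identity of the first step, yields
$$
\int |h|\,d\mu_+ \;\le\; \left(\int |h|^{1+\delta}\,d\mu_+\right)^{\!1/(1+\delta)} \beta({\cal A},{\cal B})^{\delta/(1+\delta)} \;\le\; \left(\int |h|^{1+\delta}\,d\bP_{{\cal A}\otimes{\cal B}}\right)^{\!1/(1+\delta)} \beta({\cal A},{\cal B})^{\delta/(1+\delta)},
$$
with the analogous bound for $\int|h|\,d\mu_-$ against $\bP_{\cal A}\otimes \bP_{\cal B}$. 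Since $\int |h|\,d|\mu| = \int|h|\,d\mu_+ + \int|h|\,d\mu_-$, summing the two estimates and bounding the sum by twice the maximum of the two $L^{1+\delta}$ factors produces exactly (\ref{yohei}).

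I expect no deep obstacle; the only subtle point is the first step, where one must verify that $|\mu|(\Omega\times\Omega)$ equals $2\beta({\cal A},{\cal B})$ on the nose rather than merely being bounded above or below by it. This reduces to checking that the supremum defining $\beta$, restricted to rectangular partitions, coincides with the supremum over arbitrary ${\cal A}\otimes{\cal B}$-measurable partitions used to compute $\|\mu\|_{TV}$. The equality follows from a standard approximation argument, since finite disjoint unions of rectangles form an algebra generating the product $\sigma$-algebra. Once this measure-theoretic bookkeeping is settled, the rest of the proof is a one-line H\"older estimate split across $D$ and $D^c$.
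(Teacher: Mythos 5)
Your proof is correct, but there is nothing in the paper to compare it against: the authors import this statement from Heinrich (1994), formula (4.6) (tracing back to Yoshihara (1976)), and give no proof of it. Your Hahn--Jordan-plus-H\"older argument is the standard proof and every step checks out: the dominations $\mu_+\le\bP_{{\mathcal A}\otimes{\mathcal B}}$ and $\mu_-\le\bP_{{\mathcal A}}\otimes\bP_{{\mathcal B}}$ hold exactly as you argue, H\"older with exponents $1+\delta$ and $(1+\delta)/\delta$ applied to $|h|\cdot 1$ on each Jordan piece gives the two $L^{1+\delta}$ factors times $\beta^{\delta/(1+\delta)}$, and bounding the sum by twice the maximum yields (\ref{yohei}); the degenerate cases ($\beta=0$, or an infinite $(1+\delta)$-moment) are trivially covered. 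One remark on the step you flag as the only subtle one: the identity $\mu_+(\Omega\times\Omega)=\mu_-(\Omega\times\Omega)=\beta({\mathcal A},{\mathcal B})$ is immediate from the \emph{second} of the three expressions for $\beta$ quoted in Section 3.1, namely $\beta=\sup_{C\in{\mathcal A}\otimes{\mathcal B}}\left|\bP_{{\mathcal A}\otimes{\mathcal B}}(C)-(\bP_{{\mathcal A}}\otimes\bP_{{\mathcal B}})(C)\right|$: since $\mu(\Omega\times\Omega)=0$ the two Jordan masses coincide, and the supremum over all product-measurable $C$ is attained at the positive Hahn set. So you can bypass the rectangular-partition argument entirely. (As literally phrased, your cofinality claim is false --- the Hahn set need not be refined by any finite rectangular partition; what is true, and what the paper's list of equivalent formulas already asserts, is that the supremum over rectangular partitions equals $\tfrac12\|\mu\|_{TV}$ by approximation in the generating algebra. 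Your fallback to that approximation argument is fine, just unnecessary here.)
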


We note that in the case $\beta({\mathcal A},{\mathcal B})=0$
both sides vanish, and when $\beta({\mathcal A},{\mathcal B})>0$
and $\int |h|^{1+\delta} \, {\rm d}\bP_{\mathcal A} \otimes 
\bP_{\mathcal B}=\infty$ the right hand side is $\infty$.

\medskip

Choosing the function $h$ as  
$h(\omega_1 , \omega_2) = X(\omega_1)\cdot  Z(\omega_2)$, 
$(\omega_1 , \omega_2)\in \Omega \times \Omega$, for 
real-valued random variables $X,\,  Z$ 
(with finite second moments) yields the following upper 
bound for their covariance.

\begin{corollary}
\label{covyohei}
For all real valued random variables $X,  Z\in L^2 (P)$ and all $\delta >0$
\begin{equation*}
|\hbox{Cov}(X, Z) | \leq 2 
\left(\EE\left(|X|^{2+\delta} \right)\right)^{\frac{1}{2+\delta}} 
\left(\EE \left(| Z|^{2+\delta}\right)\right)^{\frac{1}{2+\delta}}
\left(\beta(\sigma (X),\sigma ( Z))\right)^{\frac{\delta}{2+\delta}} ,
\end{equation*}
where $\sigma (X),\  \sigma ( Z)$ denote the $\sigma$-algebras generated 
by $X$ and $ Z$ respectively.
\end{corollary}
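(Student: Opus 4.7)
The plan is to apply Theorem \ref{th:yohei} to the tensor function $h(\omega_1,\omega_2)=X(\omega_1)\,Z(\omega_2)$ with $\mathcal{A}=\sigma(X)$ and $\mathcal{B}=\sigma(Z)$, and then to choose the auxiliary parameter in the theorem so that the exponents collapse to the form claimed in the corollary. First I would recognize that because $X$ is $\sigma(X)$-measurable and $Z$ is $\sigma(Z)$-measurable, $\int h\,{\rm d}\bP_{\mathcal{A}\otimes\mathcal{B}}=\EE(XZ)$ (the diagonal embedding is the joint law), whereas $\int h\,{\rm d}(\bP_{\mathcal{A}}\otimes\bP_{\mathcal{B}})=\EE(X)\,\EE(Z)$. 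Hence the signed measure in (\ref{yohei}) integrates $h$ exactly to $\mathrm{Cov}(X,Z)$, and the triangle inequality for signed measures gives
\[
|\mathrm{Cov}(X,Z)|\le \int |h|\,{\rm d}|\bP_{\mathcal{A}\otimes\mathcal{B}}-\bP_{\mathcal{A}}\otimes\bP_{\mathcal{B}}|,
\]
which is precisely the left-hand side of (\ref{yohei}).

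Next I would observe that the exponent $\delta/(2+\delta)$ sought in the corollary differs from the exponent $\delta'/(1+\delta')$ produced by the theorem; solving $\delta'/(1+\delta')=\delta/(2+\delta)$ forces $\delta'=\delta/2$. I therefore apply Theorem \ref{th:yohei} with this value, so that $1+\delta'=(2+\delta)/2$. The $\beta$-factor then comes out with the correct exponent $\delta/(2+\delta)$, and it remains to control the two integrals inside the maximum when $h$ is a product.

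For the diagonal integral, Cauchy--Schwarz yields
\[
\int |h|^{(2+\delta)/2}\,{\rm d}\bP_{\mathcal{A}\otimes\mathcal{B}}=\EE\bigl(|X|^{(2+\delta)/2}|Z|^{(2+\delta)/2}\bigr)\le \bigl(\EE|X|^{2+\delta}\bigr)^{1/2}\bigl(\EE|Z|^{2+\delta}\bigr)^{1/2};
\]
for the product integral the same bound follows from two applications of Lyapunov's (monotonicity of $L^p$-norms), since $(2+\delta)/2\le 2+\delta$. Raising each of these bounds to the power $2/(2+\delta)=1/(1+\delta')$ produces the common factor $(\EE|X|^{2+\delta})^{1/(2+\delta)}(\EE|Z|^{2+\delta})^{1/(2+\delta)}$, so the maximum in (\ref{yohei}) is dominated by this quantity and the stated inequality follows.

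There is no real obstacle here: the whole argument is a routine specialization of Theorem \ref{th:yohei}. The only subtlety worth flagging is that the symbol $\delta$ in the corollary is \emph{not} the same as the $\delta$ in the theorem—one must substitute $\delta/2$ for the theorem's parameter in order to land on the $(2+\delta)$-moments and the exponent $\delta/(2+\delta)$ that appear in the corollary.
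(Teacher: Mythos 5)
Your proposal is correct and follows essentially the same route as the paper: apply Theorem \ref{th:yohei} to $h(\omega_1,\omega_2)=X(\omega_1)Z(\omega_2)$ with the theorem's parameter set to $\delta/2$, bound the diagonal integral by Cauchy--Schwarz (the paper's H\"older step) and the product integral by the monotonicity of $L^p$-norms (the paper's Jensen step). The reparametrization you flag is exactly the paper's substitution $\delta=2\theta$.
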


\begin{proof}
 Theorem \ref{th:yohei} yields for $\theta >0$
\begin{eqnarray*}
&{}& | \hbox{Cov} (X, Z) | \\
& = & \left| \int X\cdot  Z {\rm d}\bP_{\sigma (X)\otimes \sigma ( Z)} - 
\int X\cdot  Z {\rm d}\bP_{\sigma (X)} \otimes \bP_{\sigma(Z)} \right| 
\\
& \leq & \int | X\cdot  Z | \, \left| {\rm d} 
\left(\bP_{\sigma (X)\otimes \sigma ( Z)} -\bP_{\sigma (X)} \otimes 
\bP_{\sigma ( Z)} \right) \right| \\
&\leq & 2 \max \left\{\left( \int \! |X Z|^{1+\theta} \, {\rm d} 
\bP_{\sigma (X)\otimes \sigma ( Z)}\! \right)^{\frac{1}{1+\theta}} \!\!,
\left( \int \! |X Z|^{1+\theta} \, {\rm d}\bP_{\sigma (X)} \otimes 
\bP_{\sigma ( Z)}\!\right)^{\frac{1}{1+\theta}}   \right\} \\ 
&{}& \;\, \cdot \, (\beta (\sigma (X),\sigma ( 
Z)))^{\frac{\theta}{1+\theta}} 
\end{eqnarray*}

Applying the H\"older inequality we obtain 
\begin{eqnarray*}
&{}&\left(\int |X|^{1+\theta}\, | Z|^{1+\theta} \, 
{\rm d}\bP_{\sigma (X)\otimes \sigma ( Z)}\right)^{\frac{1}{1+\theta}} \\
&{}& \, \leq \left( \int |X|^{2+2\theta}{\rm d}\bP_{\sigma (X)
\otimes \sigma ( Z)} \right)^{\frac{1}{2(1+\theta)}} 
\left( \int | Z|^{2+2\theta}{\rm d}\bP_{\sigma(X)
\otimes \sigma ( Z)} \right)^{\frac{1}{2(1+\theta)}}.
\end{eqnarray*}
On the other hand, the Jensen inequality yields,
\begin{eqnarray*}
\left( \int \! |X Z|^{1+\theta} \, {\rm d}\bP_{\sigma (X)} \otimes
\bP_{\sigma ( Z)}\right)^{\frac{1}{1+\theta}} &=&
\left(\int |X|^{1+\theta} \, {\rm d}\bP_{\sigma (X)} 
\int | Z|^{1+\theta} 
\, {\rm d}\bP_{\sigma( Z)}\right)^{\frac{1}{1+\theta}}\\ 
&\leq &  
\left(\int |X|^{2+2\theta}{\rm d}\bP_{\sigma (X)} 
\right)^{\frac{1}{2(1+\theta)}}
\left(\int | Z|^{2+2\theta}{\rm d}\bP_{\sigma ( Z)} 
\right)^{\frac{1}{2(1+\theta)}}.
\end{eqnarray*}
Now choose $\delta= 2 \theta$, and note that
$$
\EE\left(|X|^{2+\delta}\right)=\int |X|^{2+\delta}{\rm d}\bP_{\sigma(X)}
\hbox{ and }
\EE\left(| Z|^{2+\delta}\right)=\int |X|^{2+\delta}{\rm d}\bP_{\sigma(X)}\,,
$$
 and the result is shown.
\end{proof}

In the next Section the results for the covariance are applied to provide 
an upper bound for the variance of additive functionals in large windows.

\subsection{Proof of Theorem  \ref{main1}} 

Consider the half-open cube $W^o_n=[-n,n[^\ell$, $n\in \NN$, and partition 
it into half-open unit cubes
$$
c_i := \prod_{r=1}^\ell [i_r, i_r+1[ \hbox{ for } 
i=(i_1,\ldots ,i_\ell) \in \ZZ ^\ell \cap [-n,n[^\ell \,.
$$
Now, if $X$ is an additive functional  then 
$X(W^o_n, Y_t) = \sum_i X(c_i, Y_t)$. For 
simplicity we define $X(c_i, Y_t)=0$ 
if $i\not\in \ZZ^\ell \cap [-n,n[^\ell$, and we write 
$X_i$ for $X(c_i, Y_t)$.

\medskip

On $\ZZ^\ell$ we use the maximum metric 
$$
d(i,j)=\max \{ |i_r - j_r| :\, 
r=1,\ldots , \ell \}, \;  i,  j\in \ZZ^\ell. 
$$
Note that the number of points with integer coordinates in the ball of 
radius $k>0$ with center $i\in \ZZ^\ell$ is $\big|\{ j \in \ZZ^\ell : 
d(i,j)\le k\}\big|=(2k+1)^\ell$ and so
$$
\big|\{ j \in \ZZ^\ell: d( i, j)= k\}\big|=(2k+1)^\ell-(2k-1)^\ell.
$$

Note that if $d( i, j)=1$ then $c_i$ and $c_j$ their 
closures have a non-empty intersection. On the other hand
given two cubes $c_i, c_j$ with 
$d( i, j)>1$ we can shift them simultaneously such that the center 
of $c_i$ coincides with the origin and closure of the the shifted $c_i$ can be 
identified with $W_a$, $a=\frac{1}{2}$, while the shifted $c_j$ is 
located in the complement of $W_b= [-b,b]^\ell$, $b=d( i, j)-\kappa$ for all $0<\kappa < \frac{1}{2}$. 
(This notation corresponds to that one which we used in \cite{martnag15}.) 

\medskip

We apply Corollary \ref{covyohei} to all these pairs $c_i, c_j$ with 
$d( i, j)>1$ and obtain for arbitrary $\delta >0$, using (\ref{defbeta}) and the translation invariance of $X$,
\begin{equation}
\label{boundcov}
|\hbox{Cov} (X_i,X_j)| \leq 2 
\left(\EE |X(W_\frac{1}{2}, Y_t)|^{2+\delta} \right)^{\frac{2}{2+\delta}} 
\left(\beta \left( \textstyle{\frac{1}{2}}, d( i, j)- \kappa  \right) 
\right)^{\frac{\delta}{2+\delta}}
\end{equation}
For pairs $c_i, c_j$ with $d( i, j)=1$ we use the inequality
$$
|\hbox{Cov}(X_i, X_j)| \leq \sqrt{Var(X_i) \cdot Var(X_j)} = Var(X_i)=
Var(X(W_\frac{1}{2},Y_t)) .
$$ 

By writing $\sum_{ i}\,$ for the summation 
over all $ i\in \ZZ^\ell$ (which in our setting is equivalent 
to summation over all $ i\in \ZZ^\ell \cap [-n,n[^\ell$) 
\begin{eqnarray*}
&{}& Var(X(W^o_n, Y_t)) = Var \left(\sum_{ i} X_i \right) \\
&=& \sum_{ i} Var(X_i)  + 
\sum_{ i} \sum_{ j:d( i, j)=1} \!\!\! \hbox{Cov}(X_i,X_j) 
+ \sum_{ i} \sum_{k=2}^{2n-1} \sum_{ j:d( i, j)=k} \!\!\! 
\hbox{Cov}(X_i,X_j) \\
&\leq & \sum_{ i} Var(X_i)  
+ \sum_{ i} \sum_{j:d( i, j)=1} \!\!\! Var(X_i) 
+ \sum_{ i} \sum_{k=2}^{2n-1} \sum_{j:d( i, j)=k} \!\!\!
2 \left(\EE |X_i|^{2+\delta} 
\right)^{\frac{2}{2+\delta}} 
\left(\beta \left( \textstyle{\frac{1}{2}}, 
k-\kappa  \right) \right)^{\frac{\delta}{2+\delta}} \\
& \leq & (2n)^\ell Var(X_1) 
+ (2n)^\ell \, (3^{\ell} -1) Var(X_1) \\ 
&{}& \; + 2 
\left(\EE \left(|X_1|^{2+\delta} \right)\right)^{\frac{2}{2+\delta}} 
(2n)^\ell \, \sum_{k=2}^{2n-1} ((2k+1)^\ell - (2k-1)^\ell )  
\left(\beta \left( \textstyle{\frac{1}{2}}, 
k- \kappa  \right) \right)^{\frac{\delta}{2+\delta}}\\
& \leq & (2n)^\ell \left[ 
3^{\ell} Var(X_1)  + 
2 \left(\EE\left(|X_1|^{2+\delta} \right)\right)^{\frac{2}{2+\delta}} 
 2^{2\ell-1} \, 
\sum_{k=2}^{2n-1}  k^{\ell-1} \,  
\left(\beta \left( \textstyle{\frac{1}{2}}, k- 
\kappa  \right) \right)^{\frac{\delta}{2+\delta}}  \right]
\end{eqnarray*}   
Hence, for $X(W^o_n,Y_t)$ divided by the volume of $W_n$ and using 
the upper bound for $\beta$ as given in Theorem 5.3 in \cite{martnag15}, 
we obtain for any $\theta\in (0,1)$, there is a constant 
$\chi(\theta)$ such that
\begin{eqnarray*}
& {} & Var\left(\frac{1}{(2n)^\ell} X(W^o_n,Y_t)\right) \\
& \leq &  
\frac{3^{\ell}}{(2n)^\ell} Var(X_1) +  
 2^{\ell}  
\left( \EE |X_1|^{2+\delta} \right)^{\frac{2}{2+\delta}}
 \frac{1}{(2n)^\ell}  \sum_{k=2}^{2n-1}  k^{\ell -1} 
\left(\beta \left( \textstyle{\frac{1}{2}}, 
k- \kappa  \right) \right)^{\frac{\delta}{2+\delta}} \\
&\leq & 
\frac{3^{\ell}}{(2n)^\ell}  Var(X_i) +  
 2^{\ell}  
\left( \EE |X_1|^{2+\delta} \right)^{\frac{2}{2+\delta}}
 \frac{1}{(2n)^\ell} \chi (\theta) 
\sum_{k=2}^{2n-1}  k^{\ell -1} 
\left(k-\textstyle{\kappa} \right) ^{-\frac{\theta \delta}{2+\delta}} 
.\\
\end{eqnarray*}

Let us consider the asymptotic behavior for $n\to \infty$ of
$$
\frac{1}{(2n)^\ell}  \sum_{k=2}^{2n-1}  k^{\ell -1} 
\left(k-\textstyle{\kappa } \right)^{-\frac{\theta\delta}{2+\delta}} .
$$

Let us denote $\frac{{\theta}\delta}{2+\delta} = 1-\rho$. Note that 
$k^{\ell -1} \left(k-\textstyle{\kappa } \right)^{-(1-\rho)}$ 
is increasing in $k$  if $\ell \geq 2$. Thus
$$
\frac{1}{(2n)^\ell}  \sum_{k=2}^{2n-1}  
k^{\ell -1} \left(k-\textstyle{\kappa } \right) ^{-(1-\rho)} 
\leq \frac{1}{(2n)^\ell} 2n (2n)^{\ell-1} 
\left(2n-\textstyle{\kappa } \right)^{-(1-\rho)}
= \left(2n-\textstyle{\kappa } \right)^{-(1-\rho)}.
$$

Hence, we have
\begin{equation}
\label{upperbound}
Var\left(\frac{1}{(2n)^\ell} X(W_n,Y_t)\right) \leq 
\frac{3^{\ell}}{(2n)^\ell}  Var(X_ 1)  +   2^{\ell}  
\left( \EE |X_1|^{2+\delta} \right)^{\frac{2}{2+\delta}} \chi (\theta) 
\left(2n-\textstyle{\kappa } \right)^{-(1-\rho)}
\end{equation}
and thus
$$
 Var\left[\frac{1}{(2n)^\ell} X(W_n,Y_t)\right] 
\leq O\left(n^{-(1-\rho)} \right), \ n\to \infty .
$$
which is the assertion (\ref{asymvar}).


\section{Ergodic theorems for subadditive functionals of random tessellations}
\label{sec:ergod}

If the functional is not additive, we cannot use the method used in 
Section \ref{sec:yoshihein}. But for subadditive functionals, Theorem 2.9 
in \cite{kreng} by Akcoglu-Krengel and Theorem 2.3 in \cite{kreng} by 
Smythe together with the property that STIT tessellations are 
$\beta$-mixing can be applied to formulate an ergodic theorem. Here we do 
not aim at the highest level of generality, but on a formulation similar 
to Theorem \ref{main1}, which appears to be well adapted to applications 
in stochastic geometry.

\begin{theorem}
\label{ergod_krengel}
Let $Y_t$ be the STIT tessellation at time $t>0$ determined by the 
hyperplane measure $\Lambda$ satisfying Assumptions I and II. 
Further, for any $V\in {\mathcal V}$ 
let $X(V , \cdot ):\T \wedge V \to \RR$ 
be a functional with the following 
properties (briefly we write $X(V,y)$ for $X(V,y\wedge V)$):
\begin{enumerate}

\item If $n\in \NN$ and $V_1,\ldots , V_n \in {\mathcal V}$ are 
pairwise disjoint  such that $\bigcup_{r=1}^n V_r \in {\mathcal V}$, then
\begin{equation}
\label{eq:subadditiv}
X\left( \bigcup_{r=1}^n V_r , \cdot \right) \leq 
\sum_{r=1}^n X(V_r,\cdot ) \qquad (subadditivity).
\end{equation}

\item $\;X(V,y) = X(V+i,y+i)$ for all $V \in {\mathcal V}$, $i\in 
\RR^\ell$, 
$y \in \T$.

\item 
$\; \displaystyle{
\gamma = \inf \left\{  \frac{1}{(2n)^\ell} 
\EE X([-n,n[^\ell ,Y_t) , \, n\in \NN \right\} > -\infty .
} $
\end{enumerate}
Then  
\begin{equation}\label{ergod_as}
\lim_{n\to \infty} \frac{1}{(2n)^\ell } X([-n,n[^\ell ,Y_t) =\gamma \quad a.s.
\end{equation}

For the $L^1$-convergence in (\ref{ergod_as}) it is sufficient if 
(\ref{eq:subadditiv}) holds for $n=2$ only (2-subadditivity).

\end{theorem}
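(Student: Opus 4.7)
The plan is to apply the multi-parameter subadditive ergodic theorems of Akcoglu-Krengel (Theorem 2.9 in \cite{kreng}) and Smythe (Theorem 2.3 in \cite{kreng}) to the process $F(V) := X(V, Y_t)$ indexed by integer cuboids in $\mathcal{V}$. Two ingredients must be assembled: a measure-preserving ergodic $\ZZ^\ell$-action on the underlying probability space, and the identification of $F$ as a subadditive (respectively $2$-subadditive) process covariant under this action.

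First, I would equip $(\T, \mathcal{B}(\T), P_t)$ with the group of integer translations $\tau_i\, y = y + i$, $i \in \ZZ^\ell$. The spatial stationarity of $Y_t$ recalled from \cite{mnw} guarantees that $P_t$ is $\tau$-invariant. Ergodicity of this action will be extracted from Theorem \ref{decaypoly}: for cylindrical events $A \in \mathcal{B}(\T_{W'})$ and $B \in \mathcal{B}(\T_{W^c})$ with $W' \subset W$, the bound $\beta(a,b) \to 0$ as $b \to \infty$ forces $|P_t(A \cap \tau_i B) - P_t(A) P_t(B)| \to 0$ as $|i| \to \infty$. A standard monotone class argument, using that the events $\{y : \partial y \cap C = \emptyset\}$ with $C$ compact generate $\mathcal{B}(\T)$, then upgrades this to mixing, and in particular ergodicity, of the $\ZZ^\ell$-translation action on all of $\mathcal{B}(\T)$.

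Once ergodicity is in hand, the conclusion is essentially a transcription of hypotheses: assumption 2 makes $F$ covariant under $\tau$; assumption 1 gives subadditivity on disjoint unions of integer cuboids whose union is again a cuboid, which is exactly the subadditivity axiom of the Akcoglu-Krengel theorem; and assumption 3 supplies the finite time constant $\gamma > -\infty$. The Akcoglu-Krengel theorem then yields almost-sure convergence of $(2n)^{-\ell} F([-n,n[^\ell)$ to an invariant random variable, which by ergodicity is a.s.\ constant, and the standard identification of this constant with the infimum of scaled expectations gives the value $\gamma$. For the $L^1$ claim under the weaker $2$-subadditivity, Smythe's theorem applies in exactly the same way, since its subadditivity hypothesis is formulated only for pairs.

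The main obstacle, and really the only nontrivial step, is the passage from the cubical $\beta$-mixing bound of Theorem \ref{decaypoly}, stated for nested cubical windows $W' \subset W$, to mixing of the full $\ZZ^\ell$-translation action on $\mathcal{B}(\T)$. Everything else reduces to verifying that the Akcoglu-Krengel and Smythe theorems from \cite{kreng} apply verbatim in our setting; once that passage is made, no further STIT-specific input is needed beyond Assumptions I and II, which enter solely through Theorem \ref{decaypoly}.
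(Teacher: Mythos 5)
Your proposal follows the same route as the paper: the paper's own ``proof'' is a brief remark that $F(V)=X(V,Y_t)$ is a subadditive process in the sense of Krengel's Definition 2.1 (Ch.~6), that $(W_n)$ is an increasing and hence regular family, that $\beta$-mixing implies ergodicity of the translation action, and that Theorems 2.9 (Akcoglu--Krengel) and 2.3 (Smythe) then apply directly. Your write-up is a correct and somewhat more detailed version of exactly this argument, in particular spelling out the monotone-class passage from the cubical $\beta$-mixing bound to ergodicity, which the paper merely asserts.
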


Note that for additive $X$ we have $\gamma =  \EE X([0,1[^\ell , Y_t)$.

\medskip

{\bf Comments on the proof:}  
This is a special case of a subadditive process in the sense of Definition 
2.1 in Chapter $6$ in \cite{kreng}; the sequence $(W_n: n\in \NN)$ is 
increasing and therefore a regular family, see Definition 2.4 ibidem. As 
$Y_t$ is $\beta$-mixing, it is ergodic. Thus the theorem is an immediate 
corollary of Theorems 2.9 and 2.3 ibidem.

\medskip

Examples: Besides the additive functionals described in the example in 
Section \ref{mainres}, some subadditive functionals which fulfill the 
assumptions of Theorem \ref{ergod_krengel} are:

\begin{enumerate}

\item Sum of the values of translation invariant nonnegative functionals 
of those cells $C$ of the tessellation which are {\em completely} contained in a 
window $W_n$, i.e. $C\subset W_n$. This can e.g. be the intrinsic volumes 
of these cells, or the diameter (i.e. the maximal breadth) of these cells, 
or the number of $k$-dimensional faces of these cells, $k=0,1,\ldots , 
\ell -1$.

\item As before, but for all cells or parts of cells of a tessellation 
$y_t$ say, which are visible in the window $W_n$, i.e. for all $C\in y_t 
\wedge W_n$.

\item Power of order $0<\alpha <1$ of a nonnegative functional, e.g.  
$(\lambda_{\ell -1}(\partial y_t \wedge W_n))^\alpha $, where 
$\lambda_{\ell -1}$ denotes the ${(\ell -1)}$-dimensional volume.

\end{enumerate}

In all these examples, the functional $X$ is nonnegative, and hence 
$\gamma \geq 0$.

\section{Discussion}

The results in \cite{redenthae2013} indicate that several second-order 
quantities of the isotropic STIT tessellations are between the 
corresponding values for isotropic Poisson hyperplane tessellations on one 
side and for Poisson-Voronoi tessellations on the other side. Thus one can 
use results for these two other models to formulate conjectures concerning 
the variance of functionals for isotropic STIT tessellations.

\vspace{1cm}

{\bf Acknowledgments} 
The authors thank Lothar Heinrich for essential suggestions and comments.
They are indebted for the support of Program Basal CMM 
from CONICYT (Chile) and by DAAD (Germany).

\medskip

\end{document}